\numberwithin{equation}{section}
\newtheorem{theorem}{Theorem}[section]
\newtheorem{proposition}{Proposition}[section]
\newtheorem{lemma} {Lemma}[section]
\newtheorem{corollary}{Corollary}[section]
\newtheorem{remark}{Remark}[section]
\newtheorem{definition} {{Definition}}[section]
\def\R{{\mathbb R}}
\newcommand{\be} {\begin{equation}}
\newcommand{\ee} {\end{equation}}
\newcommand{\bea} {\begin{eqnarray}}
\newcommand{\eea} {\end{eqnarray}}
\newcommand{\Bea} {\begin{eqnarray*}}
\newcommand{\Eea} {\end{eqnarray*}}
\begin{document}
\title{\bf On the fourth-order Leray-Lions problem with indefinite weight and nonstandard growth conditions}
\date{}
\maketitle
\begingroup\small
\vspace{-1.5cm}
\begin{center}
{\large \bf
 K. Kefi$^{a}$, N. Irzi$^{b},$ M.M. Al-Shomrani$^{c},$ D.D. Repov\v{s}$^{d, e}$
}
\vskip 1.0cm
\end{center}
\endgroup
\begin{abstract}
We prove the existence of at least three weak solutions for the fourth-order problem with indefinite weight involving the Leray-Lions operator with nonstandard growth conditions. The proof of our main result uses variational methods
and
 the critical theorem of Bonanno and Marano (Appl. Anal. 89 (2010), 1-10).
\end{abstract}
\textit{Keywords}: Leray-Lions type operator, critical theorem, generalized Sobolev space, variable exponent.\\
\textit{ Mathematics Subject Classification (2010):} 35J20, 35J60, 35G30, 35J35, 46E35.

\section{Introduction }
\hspace{0.5cm}
In this paper, we shall show the existence of three weak solutions for the following interesting problem
\begin{equation}\label{P}
\left\{
\begin{array}{ll}
\Delta\left(a(x, \Delta u)\right) = \lambda V(x)|u|^{q(x)-2}u  &\hbox{ in} \   \Omega,\\
\\[-5mm]
u=\Delta u=0 & \hbox{on}\  \partial{\Omega},
\end{array}
\right.
\end{equation}
where $\Omega$ is a bounded domain in $\R^{N}$ ($N\geq2$) with a
smooth boundary $\partial\Omega$, $\lambda>0$ is a parameter, $V$ is a function in a generalized Lebesgue space $L^{s(x)}(\Omega),$ functions $p, q, s \in C(\overline\Omega)$ satisfy the inequalities 
$$\displaystyle 1<\min_{x\in \overline{\Omega}}q(x)\leq \max_{x\in \overline{\Omega}}q(x)<\min_{x\in \overline{\Omega}}p(x)\leq \max_{x\in \overline{\Omega}}p(x)\leq \frac{N}{2}<s(x) \ \hbox{for all} \  x\in\Omega,$$ and  $\Delta\left(a(x, \Delta u)\right)$ is the Leray-Lions operator  of the fourth-order, where  $a$ is a Carath\'{e}odory function satisfying  some  suitable supplementary conditions.
For more details about this kind of operators the reader is referred to {\sc Boureanu} \cite{bour} and {\sc Leray-Lions} \cite{Leray} (and
the references therein).

Note that the study of this type of operators is very active in
several
 fields,
 e.g. in electrorheological fluids ({\sc R\r{u}\v{z}i\v{c}ka} \cite{ruzi}), elasticity ({\sc Zhikov} \cite{zh2}), stationary thermorheological viscous flows of non-Newtonian fluids ({\sc Rajagopal-R\r{u}\v{z}i\v{c}ka}
 \cite{non}), image processing ({\sc Chen-Levine-Rao} \cite{R00}), and  mathematical description of the processes filtration of barotropic gas through a porous medium  ({\sc Antontsev-Shmarev} \cite{AN}).

Similar problems have been studied before by various authors, see e.g. recent papers of {\sc Afrouzi-Chung-Mirzapour} \cite{CAM}, {\sc Kefi-R\u{a}dulescu} \cite{Khaled}, {\sc Kong} \cite{Kong,Kong1},  and {\sc Chung-Ho} \cite{CH}. In particular,  {\sc Kefi} \cite{K}  studied the following problem
\begin{equation}\label{p}
\left\{
\begin{array}{ll}
-{\rm div}(|\nabla u|^{p(x)-2}u) = \lambda V(x)|u|^{q(x)-2}u  &\hbox{ in} \   \Omega,\\[1mm]
  u=0 & \hbox{on}\  \partial{\Omega}.
\end{array}
\right.
\end{equation}
Under the condition in problem \eqref{P}, he has shown that problem \eqref{p} has a continuous spectrum and his main argument was the Ekeland variational principal.

Before introducing our main result, we define
   $$C_{+}(\overline{\Omega}):=\{h \mid h\in C(\overline{\Omega}), h(x)>1,\ \mbox{for all}\;\; x\in \overline{\Omega}\},$$ and
  for $\eta>0$, $h\in C_+(\overline{\Omega})$, we set $$h^{-}:=\inf_{x\in\Omega}h(x),\ \ h^{+}:=\sup_{x\in\Omega}h(x)$$ and
$$[\eta]^h:=\sup\{\eta^{h^-},\eta^{h^+}\}, \ \ [\eta]_h:=\inf\{\eta^{h^-},\eta^{h^+}\}.$$
\begin{remark}
It is easy to verify that the following holds
$$[\eta]^{\frac{1}{h}}=\sup\{\eta^{\frac{1}{h^+}}, \eta^{\frac{1}{h^-}}\}, \
[\eta]_{\frac{1}{h}}=\inf\{\eta^{\frac{1}{h^+}},  \eta^{\frac{1}{h^-}}\}.$$
\end{remark}

 We denote $$\delta(x):=\sup\{\delta>0 \mid B(x,\delta)\subseteq\Omega, \ \mbox{for all}\ x\in\Omega\},$$ 
 where $B$ is the ball  of radius $\delta$
 centered at $x$. One can prove that there exists $x_0\in\Omega$ such that $B(x_0,D)\subseteq\Omega$,
where $D:=\displaystyle\sup_{x\in\Omega}\delta(x).$

Throughout this paper, we shall need the following hypotheses:
\begin{enumerate}
\item[$\bf{(H_1)}$] $a:\overline{\Omega}\times\R\rightarrow\R$ is a Carath\'{e}odory function such that $a(x, 0)=0,$ for a.e. $x\in\Omega.$
\item[$\bf{(H_2)}$] There exist  $c_1>0$ and a nonnegative function $\alpha\in L^{\frac{p(x)}{p(x)-1}}(\Omega)$ such that
$$|a(x, t)|\leq c_1 (\alpha(x)+|t|^{p(x)-1}),  \ \mbox{for a.e.} \; x\in\Omega \ \mbox{and all}\ t\in \R.$$
\item[$\bf{(H_3)}$] The following inequality holds
$$(a(x, t)-a(x, s))(t-s)\geq 0,\;\;\mbox{ for a.e.}\; x\in\Omega, \;\;\mbox{and all}\;\; s, t\in\R$$
 with equality if and only if $s=t$.
\item[$\bf{(H_4)}$] The following inequality holds
$$|t|^{p(x)}\leq \min\{ a(x, t)t,p(x)A(x, t)\},\;\mbox{ for a.e.} x\in\Omega \;\;\mbox{and all}\;\; s, t\in\R,$$
where $A :\overline{\Omega}\times\R\rightarrow\R$ represents the antiderivative of $a,$ that is,
$$A(x, t):=\displaystyle\int_{0}^{t}a(x, s)ds.$$
\item[$\bf{(H_5)}$] Assume that $V\in L^{s(x)}(\Omega)$ satisfies the following
$$V(x):=\left\{
\begin{array}{l}
\leq 0,\hspace{0.8cm} \mbox{ for}\;   x\in \Omega\setminus B(x_0,D),\\
\geq v_0, \hspace{0.7cm} \mbox{ for}\;  x\in B(x_0,\frac{D}{2}),\\
> 0, \hspace{0.8cm} \mbox{ for } \;  x\in B(x_0,D)\setminus B(x_0,\frac{D}{2}),
\end{array}
\right.
$$
where $B(x_0,D)$ is the ball of radius $D$
centered at $x_0$ and  $v_0$ is a positive constant.
\end{enumerate}

\noindent
\begin{remark}\label{c3}
\begin{enumerate}
We note the following facts:
 \item[$(1)$] $A(x, t)$ is a $C^{1}$-Carath\'{e}odory function, i.e., for every $t\in\R,$ $A(., t):\Omega\rightarrow\R$ is measurable
and $A(x, .)\in C^{1}(\R),$ for a.e. $x\in\Omega.$
\item[$(2)$] By hypothesis $(H_2),$ there exists a constant $c_3$ such that
$$|A(x, t)|\leq c_3\left(\alpha(x)|t|+|t|^{p(x)}\right), \  \mbox{for a.e.}\ x\in\Omega \;\;\mbox{and all}\;\; t\in \R.$$
\end{enumerate}
\end{remark}
In the sequel, let $$L:= w(D^N-(\frac{D}{2})^N),  \  w:=\displaystyle \frac{\pi^{\frac{N}{2}}}{\frac{N}{2}\Gamma(\frac{N}{2})},$$
where $\Gamma$ denotes the Euler function. Furthermore, let $k>0$ be the best constant for which  the inequality \eqref{k} below holds.
  The main result of this paper now reads as follows.
\begin{theorem}\label{principal}
Assume that hypotheses ${\bf(H_1})-({\bf H_5)}$ are fulfilled and that there exist $r>0$ and $d>0$ such that
\begin{equation}\label{r4}
r<\frac{1}{p^+}\Big[\frac{2d(N-1)}{D^2}\Big]_{p}L,
\end{equation}
and
\begin{eqnarray}
 \overline{w}_r &:=&\frac{1}{r}\Big\{\frac{{p^+}^{\frac{q^+}{p^-}}}{q^-}[k]^q|V|_{s(x)}\big[ [r]^{\frac{1}{p}}\big]^q\Big\}\\
 &<&~\gamma_d:=\frac{v_{0}[d]_q}{c_{3}(2^{N}-1)\Big(|\alpha|_{\frac{p(x)}{p(x)-1}}\frac{4d(N-1)}{D^2}L^{\frac{1}{p^+}-1}+\Big[\frac{4d(N-1)}{D^2}\Big]^p\Big)}.\nonumber
\end{eqnarray}
Then for every $\lambda\in\overline{\Lambda}_r:=\displaystyle \Big(\frac{1}{\gamma_d}, \frac{1}{\overline{w}_r}\Big),$ problem \eqref{P} admits at least three weak solutions.
\end{theorem}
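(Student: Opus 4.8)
The plan is to recast \eqref{P} variationally and then apply the three–critical–points theorem of Bonanno and Marano. I would work in the reflexive, separable Banach space $X:=W^{2,p(x)}(\Omega)\cap W_0^{1,p(x)}(\Omega)$ endowed with the norm $\|u\|:=|\Delta u|_{p(x)}$, which is the natural space for the Navier conditions $u=\Delta u=0$ on $\partial\Omega$. Since $q^+<p^-\le p^+\le N/2$, the embedding $X\hookrightarrow L^{q(x)}(\Omega)$ is compact, and I denote by $k>0$ the best constant in
\begin{equation}\label{k}
|u|_{q(x)}\le k\,\|u\|\qquad\text{for all }u\in X.
\end{equation}
Setting
\[
\Phi(u):=\int_\Omega A(x,\Delta u)\,dx,\qquad \Psi(u):=\int_\Omega\frac{V(x)}{q(x)}|u|^{q(x)}\,dx,
\]
one checks from the weak formulation that the critical points of $\Phi-\lambda\Psi$ on $X$ are exactly the weak solutions of \eqref{P}, so it suffices to produce three of them.

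Next I would verify the structural hypotheses of the Bonanno–Marano theorem for the pair $(\Phi,\Psi)$. By $(H_1)$–$(H_2)$ and Remark \ref{c3}, both functionals are of class $C^1(X,\R)$ with $\Phi(0)=\Psi(0)=0$. Hypothesis $(H_4)$ gives $A(x,t)\ge\tfrac1{p(x)}|t|^{p(x)}$, hence $\Phi(u)\ge\frac1{p^+}\int_\Omega|\Delta u|^{p(x)}\,dx$, which yields coercivity of $\Phi$; together with the monotonicity in $(H_3)$ it also gives sequential weak lower semicontinuity. The strict monotonicity of $a$ in $(H_3)$ makes $\Phi'$ a strictly monotone, continuous, coercive operator from $X$ to $X^*$, hence a homeomorphism with continuous inverse (Minty–Browder). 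Finally, the compactness of $\Psi'$ follows from the compact embedding $X\hookrightarrow L^{q(x)}(\Omega)$.

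The heart of the argument is to verify the coercivity condition and the gap condition of the theorem, and to match them to \eqref{r4} and to $\overline w_r<\gamma_d$. Coercivity of $\Phi-\lambda\Psi$ holds for every $\lambda>0$, since $\Phi$ grows at least like $\|u\|^{p^-}$ while $\lambda\Psi$ grows at most like $\|u\|^{q^+}$ and $q^+<p^-$. For the gap condition I need two estimates. First, if $\Phi(u)\le r$ then $\int_\Omega|\Delta u|^{p(x)}\,dx\le p^+r$; converting modular to norm through the brackets $[\cdot]^h,[\cdot]_h$, using \eqref{k}, and applying the variable–exponent Hölder inequality against $V\in L^{s(x)}$ gives $\sup_{\Phi(u)\le r}\Psi(u)\le\frac{(p^+)^{q^+/p^-}}{q^-}[k]^q|V|_{s(x)}\big[[r]^{1/p}\big]^q$, i.e. $\frac1r\sup_{\Phi(u)\le r}\Psi(u)\le\overline w_r$. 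Second, I would take $\overline u$ radial about $x_0$, equal to $d$ on $B(x_0,D/2)$, vanishing off $B(x_0,D)$, and interpolating across the annulus so that $|\Delta\overline u|$ lies between $\frac{2d(N-1)}{D^2}$ and $\frac{4d(N-1)}{D^2}$. Then $(H_4)$ and the lower value of $|\Delta\overline u|$ give $\Phi(\overline u)\ge\frac1{p^+}\big[\frac{2d(N-1)}{D^2}\big]_pL$, so \eqref{r4} forces $r<\Phi(\overline u)$; Remark \ref{c3}(2) and the upper value of $|\Delta\overline u|$ bound $\Phi(\overline u)$ from above by a constant multiple of $L$ times the denominator of $\gamma_d$; and $(H_5)$ — namely $V\ge v_0$ on $B(x_0,D/2)$ and $V\le0$ off $B(x_0,D)$ — gives $\Psi(\overline u)\ge\frac{v_0}{q^+}[d]_q|B(x_0,D/2)|$. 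Using $|B(x_0,D/2)|=L/(2^N-1)$ and cancelling the common factor $L$, these combine to $\Psi(\overline u)/\Phi(\overline u)\ge\gamma_d$. Consequently $\frac1r\sup_{\Phi(u)\le r}\Psi(u)\le\overline w_r<\gamma_d\le\Psi(\overline u)/\Phi(\overline u)$, which is the gap condition, and $\overline\Lambda_r=\big(\tfrac1{\gamma_d},\tfrac1{\overline w_r}\big)\subseteq\big(\tfrac{\Phi(\overline u)}{\Psi(\overline u)},\tfrac{r}{\sup_{\Phi\le r}\Psi}\big)$; the Bonanno–Marano theorem then delivers, for each $\lambda\in\overline\Lambda_r$, at least three critical points of $\Phi-\lambda\Psi$, i.e. three weak solutions of \eqref{P}.

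I expect the main obstacle to be the two explicit estimates in the last step, where the variable–exponent bookkeeping must be controlled precisely: the passage between the modular $\int|\Delta u|^{p(x)}\,dx$ and the norm $\|u\|$ (and likewise for $|u|_{q(x)}$) through the brackets $[\cdot]^h,[\cdot]_h$, together with the choice and computation of the radial test function $\overline u$ in the fourth-order setting, is what makes the abstract gap condition collapse to the clean inequalities \eqref{r4} and $\overline w_r<\gamma_d$. A secondary, more routine point that still must be checked with care is the continuous invertibility of $\Phi'$, which is where the full strength of the strict monotonicity in $(H_3)$ enters.
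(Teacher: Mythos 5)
Your overall architecture coincides with the paper's: the same pair $\Phi,\Psi$, the same Bonanno--Marano theorem, the same radial test function (equal to $d$ on $B(x_0,D/2)$, affine in $|x-x_0|$ on the annulus), and the same two final estimates. However, there is a genuine gap in your functional-analytic setup: you define $k$ as the best constant of the embedding $X\hookrightarrow L^{q(x)}(\Omega)$, i.e. $|u|_{q(x)}\le k\|u\|$, and you base the bound on $\sup_{\Phi(u)\le r}\Psi(u)$, the coercivity of $\Phi-\lambda\Psi$, and the compactness of $\Psi'$ on this embedding. This is too weak, because the weight $V$ is only in $L^{s(x)}(\Omega)$ and may be unbounded. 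After H\"older's inequality one has
\begin{equation*}
\int_\Omega |V|\,|u|^{q(x)}\,dx \;\le\; C\,|V|_{s(x)}\,\big||u|^{q(x)}\big|_{s'(x)},
\end{equation*}
and by Proposition \ref{pr} the factor $\big||u|^{q(x)}\big|_{s'(x)}$ is controlled by powers of $|u|_{s'(x)q(x)}$, \emph{not} of $|u|_{q(x)}$; since $s'(x)q(x)>q(x)$, the $L^{q(x)}$-norm cannot dominate it. Consequently the key inequality $\Psi(u)\le\frac1{q^-}|V|_{s(x)}\big[k\|u\|\big]^q$ --- which drives both $\frac1r\sup_{\Phi\le r}\Psi\le\overline{w}_r$ and the coercivity of $\Phi-\lambda\Psi$ --- does not follow from your embedding. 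Moreover, the theorem's $\overline{w}_r$ is defined with the paper's $k$, namely the best constant in $|u|_{s'(x)q(x)}\le k\|u\|$ (inequality \eqref{k} of Remark \ref{embedding}), so with your $k$ you would in fact be proving a different statement. The same issue affects the compactness of $\Psi'$: to pass to the limit in $\int_\Omega V|u|^{q(x)-2}u\,(v_n-v)\,dx$ one needs the compact embedding $X\hookrightarrow L^{\beta(x)}(\Omega)$ with $\beta(x)=\frac{s(x)q(x)}{s(x)-q(x)}$, not merely $X\hookrightarrow L^{q(x)}(\Omega)$. Everything is repaired by replacing your embedding with the two embeddings of Remark \ref{embedding}; once that substitution is made, your argument coincides with the paper's.

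A secondary point: you assert that strict monotonicity, continuity and coercivity of $\Phi'$ yield ``a homeomorphism with continuous inverse (Minty--Browder).'' Minty--Browder gives bijectivity (surjectivity from coercivity, injectivity from strict monotonicity), but continuity of $(\Phi')^{-1}$ is not automatic for merely strictly monotone operators. The paper proves it separately: given $f_n\to f$ in $X^*$, it extracts a weakly convergent subsequence of $u_n=(\Phi')^{-1}(f_n)$, shows $\int_\Omega a(x,\Delta u_n)(\Delta u_n-\Delta\tilde u)\,dx\to0$, and invokes the $(S_+)$-type result of Boureanu \cite[Theorem 3.2]{bour} to upgrade weak to strong convergence. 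You flagged this step as one to ``check with care,'' but as written it is asserted rather than proved, and it does require this additional compactness-type ingredient beyond monotonicity.
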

\begin{remark}
If we set $ r=1,$ then conditions of Theorem \ref{principal}  read as follows:  
There exists $d>0$ such that
$$p^+<\Big[\frac{2d(N-1)}{D^2}\Big]_{p}L$$ and
\begin{eqnarray}
 \overline{w}_1 :=\Big\{\frac{{p^+}^{\frac{q^+}{p^-}}}{q^-}[k]^q|V|_{s(x)}\Big\}< \gamma_d
 \end{eqnarray}
\end{remark}

\begin{remark}\label{ope}
We are interested in the Leray-Lions type operators because they are quite general. Indeed, consider
\begin{equation}\label{a}
a(x,t):=\theta(x)|t|^{p(x)-2}t,
\end{equation}
where $p\in C_+(\overline{\Omega}),\ p^+<+\infty,$ and choose $\theta\in L^{\infty}(\Omega)$ such that there exists $\theta_0>0$ with $\theta(x)\geq\theta_0>0,$ for a.e. $x\in\Omega$. One can then see that \eqref{a} satisfies hypotheses $\bf{(H_1)-(H_4)}$ and we arrive at the following operator

$$\Delta\big(\theta(.)|\Delta|^{p(.)-2} \Delta u\big).$$
Note that when $\theta\equiv 1$, we get the well-known  $p(x)$-biharmonic operator $\Delta_{p(.)}^2(u),$ see {\sc Kefi-R\u{a}dulescu} \cite{Khaled}.
Moreover, we can make the  choice $$a(x,t):=\theta(x)(1+|t|^2)^{\frac{p(x)}{p(x)-2}}t,$$
and obtain the following operator $$\Delta\Big(\theta(.)(1+|\Delta u|^2)^{\frac{p(.)}{p(.)-2}}\Delta u\Big),$$
where $p$ and $\theta$ are as in \eqref{a}.
\end{remark}
In the sequel, define $a(x,t)$ as in \eqref{a} with $\theta\equiv 1.$ Then problem \eqref{P} becomes

  \begin{equation}\label{P1}
\left\{
\begin{array}{lll}
\Delta\big(|\Delta u|^{p(x)-2} \Delta u\big) = \lambda V(x)|u|^{q(x)-2}u  &\hbox{ in} \   \Omega,\\[1mm]
 u=\Delta u=0 & \hbox{on}\  \partial{\Omega},
\end{array}
\right.
\end{equation}
and we obtain the following result.
\begin{corollary}
Assume that there exist $r,d>0$ such that
\begin{equation}\label{r}
r<\frac{1}{p^+}\Big[\frac{2d(N-1)}{D^2}\Big]_{p}L,
\end{equation}
and
\begin{eqnarray}
 \overline{w}_r < \gamma_d:=\frac{v_{0}[d]_q}{c_{3}(2^{N}-1)\Big[\frac{4d(N-1)}{D^2}\Big]^p}
 \end{eqnarray}
Then for every $$\lambda\in\overline{\Lambda}_r:=\displaystyle \Big(\frac{1}{\gamma_d}, \frac{1}{\overline{w}_r}\Big),$$ problem \eqref{P1} admits at least three weak solutions.
\end{corollary}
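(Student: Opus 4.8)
The plan is to obtain the Corollary as a direct specialization of Theorem \ref{principal}, taking the abstract Carath\'{e}odory function to be the concrete $a(x,t)=|t|^{p(x)-2}t$ (that is, $\theta\equiv 1$ in \eqref{a}), for which problem \eqref{P} reduces to the $p(x)$-biharmonic problem \eqref{P1}. Since hypothesis $(H_5)$ and the geometric data $L$, $D$, $k$ do not involve $a$, they transfer unchanged; the only genuine work is to confirm that this $a$ meets $(H_1)$--$(H_4)$ and to see how the resulting constants collapse the general expression for $\gamma_d$ into the one stated in the Corollary.

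First I would check the structural hypotheses on $a(x,t)=|t|^{p(x)-2}t$, as already indicated in Remark \ref{ope}. Hypothesis $(H_1)$ is immediate, since $a(\cdot,0)=0$ and $a$ is Carath\'{e}odory. For $(H_2)$ one has $|a(x,t)|=|t|^{p(x)-1}$, so the growth bound holds with $c_1=1$ and, crucially, with $\alpha$ taken identically zero. Hypothesis $(H_3)$ is the familiar strict monotonicity of $t\mapsto|t|^{p(x)-2}t$ for each fixed $x$. Finally, integrating gives the antiderivative $A(x,t)=\frac{|t|^{p(x)}}{p(x)}$, so that $a(x,t)t=|t|^{p(x)}=p(x)A(x,t)$; hence $\min\{a(x,t)t,\,p(x)A(x,t)\}=|t|^{p(x)}$ and $(H_4)$ holds, with equality. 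The same computation shows that the constant $c_3$ of Remark \ref{c3} may be taken equal to $1/p^-$.

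The decisive point is that $\alpha\equiv 0$. In the denominator of $\gamma_d$ in Theorem \ref{principal}, the summand $|\alpha|_{\frac{p(x)}{p(x)-1}}\frac{4d(N-1)}{D^2}L^{\frac{1}{p^+}-1}$ carries the factor $|\alpha|_{\frac{p(x)}{p(x)-1}}=0$ and thus vanishes, leaving only $\big[\frac{4d(N-1)}{D^2}\big]^p$; this is exactly the $\gamma_d$ recorded in the Corollary. Because $\overline{w}_r$ and the growth condition \eqref{r} (which coincides with \eqref{r4}) involve neither $a$ nor $\alpha$, they are untouched. Consequently the hypotheses \eqref{r} and $\overline{w}_r<\gamma_d$ of the Corollary are precisely those of Theorem \ref{principal} for this choice of $a$, and the theorem yields at least three weak solutions of \eqref{P1} for every $\lambda\in\overline{\Lambda}_r=\big(\frac{1}{\gamma_d},\frac{1}{\overline{w}_r}\big)$. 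No real obstacle arises here: all the variational analysis and the Bonanno--Marano critical-point theorem are already absorbed into the proof of Theorem \ref{principal}, and the only items meriting a little care are the equality case in $(H_4)$ and the legitimacy of choosing $\alpha\equiv 0$, both of which are settled above.
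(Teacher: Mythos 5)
Your proposal is correct and follows exactly the route the paper intends: the Corollary is stated immediately after specializing $a(x,t)=|t|^{p(x)-2}t$ (the case $\theta\equiv 1$ of \eqref{a}), and Remark \ref{ope} already records that this choice satisfies $\bf{(H_1)}$--$\bf{(H_4)}$, so the Corollary is just Theorem \ref{principal} with the $\alpha\equiv 0$ term deleted from the denominator of $\gamma_d$, precisely as you argue. Your additional observations (the equality case in $(H_4)$, $A(x,t)=|t|^{p(x)}/p(x)$, and $c_3=1/p^-$) are accurate refinements of what the paper leaves implicit.
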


This paper is organized as follows: in Section \ref{sec2}, we give some preliminaries and necessary background results on the Sobolev spaces with variable exponents, whereas Section \ref{sec3} is devoted to the proof of our main result.
\section{Preliminaries and Background}\label{sec2}

In this section, we recall some definitions and basic properties of variable exponent Sobolev spaces. For a deeper treatment of these spaces, we
refer the reader to {\sc Fan-Zhao} \cite{fan},
 {\sc R\u{a}dulescu} \cite{rad}, and {\sc R\u{a}dulescu-Repov\v{s}} \cite{RR}, and for the other background material to 
{\sc Papageorgiou-R\u{a}dulescu-Repov\v{s}} \cite{PRR}.\\

 Let $p\in C_{+}(\overline{\Omega})$ be such that
\begin{equation*}\label{e2.2}
1<p^{-}:=\displaystyle\min_{x\in\overline{\Omega}}p(x)\leq
p^{+}:=\displaystyle\max_{x\in\overline{\Omega}}p(x)<+\infty.\end{equation*}
We define the Lebesgue space with variable exponent as follows  $$L^{p(x)}(\Omega):=\{u \mid u:\Omega\rightarrow \mathbb{R}\;\;\mbox{is  measurable}, \int_{\Omega}|u(x)|^{p(x)}dx<\infty\},$$
which is  equipped with the  so-called Luxemburg norm
$$|u|_{p(x)}:=\inf\left\{\mu>0 \mid \int_{\Omega}\Big|\frac{u(x)}{\mu}\Big|^{p(x)}dx\leq 1\right\}.$$

Variable exponent Lebesgue spaces are like classical Lebesgue spaces in many respects: they
are Banach spaces and are reflexive if and only if $1 < p^{-}\leq
q^{+} < \infty$.  Moreover, the inclusion between Lebesgue spaces is generalized naturally:
 if $q_{1}, q_{2}$ are such that
$p_{1}(x) \leq p_{2}(x),$ a.e. $x \in \Omega$, then there exists a continuous embedding
\begin{center}$L^{p_{2}}(x)(\Omega)\hookrightarrow L^{p_{1}}(x)(\Omega)$.
\end{center}
For $u \in L^{p(x)}(\Omega)$ and $v \in L^{p'(x)}(\Omega)$, the H\"older inequality holds
\begin{equation}\label{ing}
\Big|\int_{\Omega}u v dx\Big|\leq \Big(\frac{1}{p^{-}}+\frac{1}{(p')^{-}}\Big)|u|_{p(x)}|v|_{p'(x)},
\end{equation}
where $\displaystyle \frac{1}{p(x)} + \frac{1}{p'(x)}=1$.

The modular on the space $L^{p(x)}(\Omega)$ is the map $\rho_{p(x)}:L^{p(x)}(\Omega)\rightarrow \mathbb{R}$ defined by $$\rho_{p(x)}(u):=\int_{\Omega}|u|^{p(x)}dx.$$

For any positive integer $m,$ we define the Sobolev space with variable exponents as follows:
\begin{eqnarray*} W^{m,p(x)}(\Omega):=\left\{u\in L^{p(x)}(\Omega)\mid D^{\alpha}u\in L^{p(x)}(\Omega), |\alpha|\leq m \right\},
\end{eqnarray*} where $\alpha := \left(\alpha_1, \alpha_2, . . . , \alpha_N\right)$ is a multi-index and
 $$\displaystyle|\alpha|:=\sum_{i=1}^{N}\alpha_i, \
 D^{\alpha}u:=\frac{\partial^{|\alpha|}u}{\partial^{\alpha_1}x_1....\partial^{\alpha_N}x_N}.$$
Then $W^{m,p(x)}(\Omega)$ is a separable and reflexive Banach space equipped with the norm
$$\|u\|_{m,p(x)}:=\sum_{|\alpha|\leq m}|D^{\alpha}u|_{p(x)}.$$

The space $W_{0}^{m,p(x)}(\Omega)$ is the closure of $C_{0}^{\infty}(\Omega)$ in $W^{m,p(x)}(\Omega)$.
It's well-known that both $W^{2,p(x)}(\Omega)$ and $W_{0}^{1,p(x)}(\Omega)$ are separable and reflexive Banach spaces. It follows that
$$X:=W^{2,p(x)}(\Omega)\cap W_{0}^{1,p(x)}(\Omega),$$
is also a separable and reflexive Banach space, when equipped with the norm
$$\|u\|_{X}:=\|u\|_{W^{2,p(x)}(\Omega)}+\|u\|_{W_{0}^{1,p(x)}(\Omega)}.$$

Let $$\|u\|:=\inf\left\{ \mu>0 \mid \int_{\Omega}\Big|\frac{\Delta u}{\mu}\Big|^{p(x)} dx\leq 1\right\},$$
represent a norm which is equivalent to $\|.\|_{X}$ on $X$ (see {\sc El Amrouss-Ourraoui} \cite[Remark 2.1]{elo}). Therefore in what follows, we shall consider the normed space $\left(X, \|.\|\right).$

The modular on the space $X$ is the map $\rho_{p(x)}:X\rightarrow \mathbb{R}$ defined by
$$\rho_{p(x)}(u):=\int_{\Omega}|\Delta u|^{p(x)}dx.$$
This mapping satisfies some useful properties and we cite some below.
\begin{lemma}({\sc El Amrous-Moradi-Moussaoui} \cite{ela}) \label{2.2}
For every $u, u_n \in W^{2, p(.)}(\Omega),$ the following statements hold:
\begin{enumerate}
\item[$(1)$] $\|u\|<1 \;(\mbox{resp.} =1, >1) \Longleftrightarrow \rho_{p(x)}(u)<1 \;(\mbox{resp.} =1, >1);$\\
\item[$(2)$] $[\|u\|]_p:=\displaystyle\min\{\|u\|^{p^{-}}, \|u\|^{p^{+}}\}\leq \rho_{p(x)}\leq\displaystyle\max\{\|u\|^{p^{-}}, \|u\|^{p^{+}}\}:=[\|u\|]^p;$\\
 \item [$(3)$] $\|u_n\|\rightarrow0\;(\mbox{resp.} \rightarrow\infty)\Leftrightarrow \rho_{p(x)}(u_n)\rightarrow0\;(\mbox{resp.} \rightarrow\infty).$
\end{enumerate}
\end{lemma}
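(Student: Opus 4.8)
The plan is to reduce all three assertions to the single normalization identity $\rho_{p(x)}(u/\|u\|)=1$ (valid for $u\neq 0$) and then exploit an elementary scaling inequality for the modular. First I would dispose of the trivial case $\Delta u=0$ a.e. (equivalently $u=0$ in $X$), for which both $\|u\|$ and $\rho_{p(x)}(u)$ vanish and every claim is immediate. For $u\neq 0$, set $\sigma(\mu):=\rho_{p(x)}(u/\mu)=\int_{\Omega}\mu^{-p(x)}|\Delta u|^{p(x)}\,dx$ for $\mu>0$, so that by definition $\|u\|=\inf\{\mu>0 : \sigma(\mu)\le 1\}$. The first key step is to show that $\sigma$ is continuous on $(0,\infty)$, strictly decreasing, with $\sigma(\mu)\to+\infty$ as $\mu\to 0^{+}$ and $\sigma(\mu)\to 0$ as $\mu\to\infty$: continuity follows from the dominated convergence theorem, using that on any interval $[\mu_0-\eps,\mu_0+\eps]\subset(0,\infty)$ the integrands are dominated by a constant multiple of $|\Delta u|^{p(x)}\in L^1(\Omega)$, while monotonicity and the limiting values come from the pointwise behaviour of $\mu\mapsto\mu^{-p(x)}$. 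By the intermediate value theorem the infimum is then attained and $\sigma(\|u\|)=1$, i.e. $\rho_{p(x)}(u/\|u\|)=1$.

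The second key step is the scaling inequality. For every $t>0$ and a.e. $x$ the factor $t^{p(x)}$ lies between $[t]_p=\min\{t^{p^{-}},t^{p^{+}}\}$ and $[t]^p=\max\{t^{p^{-}},t^{p^{+}}\}$, so integrating $t^{p(x)}|\Delta v|^{p(x)}$ yields $[t]_p\,\rho_{p(x)}(v)\le\rho_{p(x)}(tv)\le[t]^p\,\rho_{p(x)}(v)$ for any $v\in X$. Part $(2)$ is then immediate: writing $u=\|u\|\,v$ with $v:=u/\|u\|$, the normalization gives $\rho_{p(x)}(v)=1$, and the scaling inequality with $t=\|u\|$ collapses to $[\|u\|]_p\le\rho_{p(x)}(u)\le[\|u\|]^p$, which is exactly the assertion. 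Note that the bracket notation absorbs the dependence on whether $\|u\|$ is greater or less than $1$, so no case distinction is needed here.

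For Part $(1)$ I would again use $\sigma(\|u\|)=1$ together with the strict monotonicity of $\sigma$. Since $\sigma(1)=\rho_{p(x)}(u)$, and $\sigma$ is strictly decreasing, one has $\sigma(\mu)>1$ for $\mu<\|u\|$ and $\sigma(\mu)<1$ for $\mu>\|u\|$; evaluating at $\mu=1$ yields simultaneously $\|u\|<1\Leftrightarrow\rho_{p(x)}(u)<1$, $\|u\|=1\Leftrightarrow\rho_{p(x)}(u)=1$, and $\|u\|>1\Leftrightarrow\rho_{p(x)}(u)>1$. Part $(3)$ then follows by combining Parts $(1)$ and $(2)$. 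If $\rho_{p(x)}(u_n)\to 0$, then eventually $\rho_{p(x)}(u_n)<1$, so $\|u_n\|<1$ by Part $(1)$, whence $\|u_n\|^{p^{+}}=[\|u_n\|]_p\le\rho_{p(x)}(u_n)$ by Part $(2)$ and therefore $\|u_n\|\to 0$; conversely $\|u_n\|<1$ eventually gives $\rho_{p(x)}(u_n)\le[\|u_n\|]^p=\|u_n\|^{p^{-}}\to 0$. The statements with $\to\infty$ are proved identically, with $p^{-}$ and $p^{+}$ interchanged and the inequality ``$<1$'' replaced by ``$>1$''.

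The step I expect to be most delicate is establishing the normalization $\rho_{p(x)}(u/\|u\|)=1$, since it is the only place where one must verify that the infimum defining the Luxemburg norm is actually attained; this rests on the continuity of $\sigma$, which in turn requires the dominated convergence argument above and the integrability of $|\Delta u|^{p(x)}$ guaranteed by $u\in X$. Once this is in hand, everything downstream is purely algebraic manipulation of the scaling inequality, and the proof is complete.
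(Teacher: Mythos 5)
Your proof is correct. Note that the paper itself gives no proof of this lemma---it is imported by citation from {\sc El Amrouss-Moradi-Moussaoui} \cite{ela}---and your argument (establishing attainment of the Luxemburg infimum via the continuity, strict monotonicity, and limiting behaviour of $\sigma(\mu)=\rho_{p(x)}(u/\mu)$, so that $\rho_{p(x)}(u/\|u\|)=1$, and then deducing all three parts from the elementary scaling inequality $[t]_p\,\rho_{p(x)}(v)\leq\rho_{p(x)}(tv)\leq [t]^p\,\rho_{p(x)}(v)$) is precisely the standard one in that literature, so the approaches coincide.
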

\begin{proposition}\label{pr}({\sc Edmunds-Rakosnik} \cite{R7}) Let $p$ and $q$ be  measurable functions such
that $p\in L^{\infty}(\Omega)$, and $1\leq p(x)q(x)\leq\infty$, for a.e. $x\in\Omega$. Let $u\in L^{q(x)}(\Omega)$, $u\neq0$. Then
$$[|u|_{p(x)q(x)}]_p:=\min\{|u|^{p^{+}}_{p(x)q(x)},|u|^{p^{-}}_{p(x)q(x)}\}
\leq ||u|^{p(x)}|_{q(x)}$$ $$
\leq
[|u|_{p(x)q(x)}]^p:=
\max\{|u|^{p^{-}}_{p(x)q(x)}, |u|^{p^{+}}_{p(x)q(x)}\}.$$
\end{proposition}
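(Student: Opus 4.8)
The plan is to reduce the two-sided estimate to the scaling behaviour of the Luxemburg norm together with the unit-ball (modular) property. Write $r(x) := p(x)q(x)$, so that $u \in L^{r(x)}(\Omega)$ is exactly what makes both sides meaningful, and set $\lambda := |u|_{r(x)}$; since $u \neq 0$ and $p \in L^{\infty}(\Omega)$ forces $r^{+} < \infty$, the scalar $\lambda$ is finite and strictly positive. The starting point is the norm--modular relation for $L^{r(x)}(\Omega)$ (the analogue of Lemma \ref{2.2}(2)): because $r^{+} < \infty$ and $u \neq 0$,
$$\int_{\Omega}\left(\frac{|u(x)|}{\lambda}\right)^{p(x)q(x)}\,dx = 1.$$
Setting $v := |u|^{p(x)}$, the quantity to be estimated is $|v|_{q(x)} = \inf\{\mu > 0 : \int_{\Omega}(|u|^{p(x)}/\mu)^{q(x)}\,dx \le 1\}$, and since the integrand equals $|u|^{r(x)}/\mu^{q(x)}$, the whole problem is expressed through the single density $|u|^{r(x)}$ and the scalars $\lambda$ and $\mu$.

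The heart of the argument is a comparison of the exponents $p^{-} \le p(x) \le p^{+}$, carried out according to whether $\lambda \ge 1$ or $\lambda < 1$; this dichotomy is precisely what produces the $\min$/$\max$ in $[\,\cdot\,]_{p}$ and $[\,\cdot\,]^{p}$. Suppose first $\lambda \ge 1$. Testing $\mu = \lambda^{p^{+}}$ gives $\mu^{q(x)} = \lambda^{p^{+}q(x)} \ge \lambda^{p(x)q(x)}$, hence $\int_{\Omega}|u|^{r(x)}/\mu^{q(x)}\,dx \le \int_{\Omega}|u|^{r(x)}/\lambda^{r(x)}\,dx = 1$, so $\mu = \lambda^{p^{+}}$ is admissible in the infimum and $|v|_{q(x)} \le \lambda^{p^{+}}$. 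Testing $\mu = \lambda^{p^{-}}$ instead reverses the exponent comparison and yields $\int_{\Omega}|u|^{r(x)}/\mu^{q(x)}\,dx \ge 1$, whence $\lambda^{p^{-}} \le |v|_{q(x)}$ by the unit-ball property. Since $\lambda \ge 1$ makes $\lambda^{p^{-}} = \min\{\lambda^{p^{-}},\lambda^{p^{+}}\}$ and $\lambda^{p^{+}} = \max\{\lambda^{p^{-}},\lambda^{p^{+}}\}$, both bounds are exactly the claimed ones.

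For $\lambda < 1$ the two test exponents exchange roles: as $t \mapsto \lambda^{t}$ is now decreasing, $\mu = \lambda^{p^{-}}$ gives $\mu^{q(x)} \ge \lambda^{r(x)}$ and hence $|v|_{q(x)} \le \lambda^{p^{-}} = \max\{\lambda^{p^{-}},\lambda^{p^{+}}\}$, while $\mu = \lambda^{p^{+}}$ gives the reverse inequality and $\lambda^{p^{+}} = \min\{\lambda^{p^{-}},\lambda^{p^{+}}\} \le |v|_{q(x)}$. Combining the two cases yields $[\,|u|_{r(x)}\,]_{p} \le \big||u|^{p(x)}\big|_{q(x)} \le [\,|u|_{r(x)}\,]^{p}$, which is the assertion. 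I expect the only genuinely delicate point to be the normalization $\int_{\Omega}(|u|/\lambda)^{r(x)}\,dx = 1$ and the two equivalences ``$\int_{\Omega}(|v|/\mu)^{q(x)}\,dx \le 1 \iff \mu \ge |v|_{q(x)}$'' and its $\ge$ counterpart, both of which rely on $r^{+},q^{+} < \infty$ and on the continuity and strict monotonicity of $\mu \mapsto \int_{\Omega}(|u|/\mu)^{r(x)}\,dx$. I would therefore isolate these modular facts first, after which the two-case comparison uses nothing beyond the monotonicity of the power function.
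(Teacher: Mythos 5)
The paper never proves Proposition \ref{pr}: it is quoted verbatim from Edmunds--Rakosnik \cite{R7}, so there is no in-paper argument to compare yours against. That said, your scaling argument is exactly the standard proof of this power rule: normalize $\lambda=|u|_{p(x)q(x)}$, exploit $\mu^{q(x)}$ versus $\lambda^{p(x)q(x)}$ by testing $\mu=\lambda^{p^{+}}$ and $\mu=\lambda^{p^{-}}$, and split according to $\lambda\geq 1$ or $\lambda<1$; the four inequalities you derive are all correct, the identification of which test value gives the $\min$ and which the $\max$ is right in both cases, and your silent replacement of the stated hypothesis $u\in L^{q(x)}(\Omega)$ by $u\in L^{p(x)q(x)}(\Omega)$ is the correct reading of what is surely a typo (the conclusion is vacuous unless $|u|_{p(x)q(x)}<\infty$). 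Your closing reduction of the ``unit-ball'' implications to monotonicity of the modular also goes through, since $1\leq p(x)q(x)$ and $p\leq p^{+}<\infty$ force $q^{-}\geq 1/p^{+}>0$.

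There is, however, one genuine error: the claim that ``$u\neq 0$ and $p\in L^{\infty}(\Omega)$ forces $r^{+}<\infty$'' is false. Boundedness of $p$ says nothing about $q$, and the hypothesis $1\leq p(x)q(x)\leq\infty$ explicitly permits $r=pq$ to be unbounded, even equal to $+\infty$ on a set of positive measure. Your proof needs $r^{+}<\infty$ (and $q^{+}<\infty$) precisely where you flag delicacy: the normalization $\int_{\Omega}(|u|/\lambda)^{r(x)}\,dx=1$ can fail for unbounded $r$ (the modular is then only lower semicontinuous in $\mu$, and one merely has $\rho_{r(x)}(u/\lambda)\leq 1$, possibly strictly), and where $r=\infty$ the modular acquires an essential-supremum component, so your starting identity is not even the correct object there. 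For the bounded-exponent case this is a complete proof, and that case covers every application in this paper (the proposition is invoked with exponents built from $s(x)$, $q(x)$, all continuous on $\overline{\Omega}$), but it does not prove the statement as written. The patch is routine: the upper bound needs only $\rho_{r(x)}(u/\lambda)\leq 1$, which holds for any Luxemburg norm by Fatou; for the lower bound, take $\lambda'<\lambda$, so that $\rho_{r(x)}(u/\lambda')>1$ by definition of the infimum, run your exponent comparison with $\lambda'$ in place of $\lambda$, and let $\lambda'\uparrow\lambda$; finally, the set where $r=\infty$ is treated separately via the $L^{\infty}$-component, where $\||u|^{p(\cdot)}\|_{L^{\infty}}$ compares directly with the powers $\|u\|_{L^{\infty}}^{p^{\pm}}$.
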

We recall that the critical Sobolev exponent is defined as follows:
$$
p^*(x):=\left\{
\begin{array}{l}
\displaystyle\frac{Np(x)}{N-2p(x)},\quad \mbox{if}\;\; p(x)<\frac{N}{2},\\
\displaystyle+\infty, \hspace{0.8cm}\quad\quad \mbox{if}\;\; p(x)\geq \frac{N}{2}. \\
\end{array}
\right.
$$
\begin{remark}\label{embedding}({\sc Kefi} \cite{K})
  Denote the conjugate exponent of the function $s(x)$ by $s'(x)$ and set $\beta(x):=\displaystyle \frac{s(x)q(x)}{s(x)-q(x)}.$ Then there exist  compact and continuous embeddings $X \hookrightarrow L^{s'(x)q(x)}(\Omega)$ and $X \hookrightarrow L^{\beta(x)}(\Omega)$
  and the best constant $k>0$ such that
  \begin{eqnarray}\label{k}
 |u|_{s'(x)q(x)}\leq k\|u\|.
 \end{eqnarray}
\end{remark}
In order to formulate the variational approach to problem $(\ref{P}),$ let us recall the definition of a weak solution for our problem.
\begin{definition} We say that $u\in X\backslash\{0\}$ is a weak solution of problem $(\ref{P})$ if $\Delta u=0$ on $\partial{\Omega}$ and
$$ \int_{\Omega}a(x, \Delta u)\Delta vdx-\lambda\int_{\Omega}V(x)|u|^{q(x)-2}uvdx=0,\;\;\;\mbox{ for all} \;\;v\in X.$$
\end{definition}
We state the following proposition which will be needed in Section \ref{sec3}.
\begin{proposition}$(${\sc Gasi\'nski-Papageorgiou} \cite{GP}$)$\label{compact}
If $X$ is a reflexive Banach space, $Y$ is a Banach space, $Z\subset X$ is nonempty, closed and convex
subset, and $J:Z\rightarrow Y$ is completely continuous, then $J$ is compact.
\end{proposition}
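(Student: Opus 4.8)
The plan is to reduce the desired compactness to sequential compactness of the images of bounded sets, and then to exploit the reflexivity of $X$ together with the weak closedness of closed convex sets. First I would make the two notions precise: $J$ being \emph{completely continuous} means that $u_n\rightharpoonup u$ weakly in $X$ forces $J(u_n)\to J(u)$ strongly in $Y$, while $J$ being \emph{compact} means that $J$ is continuous and maps bounded sets onto relatively compact sets. As a preliminary observation, complete continuity already delivers continuity: every strongly convergent sequence is weakly convergent, so $u_n\to u$ in $X$ implies $u_n\rightharpoonup u$, whence $J(u_n)\to J(u)$ in $Y$; thus $J$ is sequentially continuous, which in the metric setting of Banach spaces is genuine continuity.

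It then remains to show that $J$ sends bounded subsets of $Z$ to relatively compact subsets of $Y$. I would fix a bounded set $B\subseteq Z$; since $Y$ is a metric space, it suffices to prove that every sequence in $J(B)$ admits a convergent subsequence. Accordingly, pick $(u_n)\subseteq B$ and consider the image sequence $\big(J(u_n)\big)\subseteq J(B)$. Because $X$ is reflexive and $(u_n)$ is bounded, the Eberlein--\v{S}mulian theorem (equivalently, reflexivity combined with Banach--Alaoglu) furnishes a subsequence $(u_{n_k})$ and an element $u\in X$ with $u_{n_k}\rightharpoonup u$ weakly in $X$. The decisive step is to guarantee that this weak limit $u$ actually lies in $Z$, so that $J(u)$ is defined and complete continuity is applicable: here I would invoke Mazur's theorem, namely that a closed convex subset of a normed space is weakly closed. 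Since $Z$ is nonempty, closed and convex and $(u_{n_k})\subseteq Z$ with $u_{n_k}\rightharpoonup u$, weak closedness yields $u\in Z$. Now complete continuity applies and gives $J(u_{n_k})\to J(u)$ strongly in $Y$, so the arbitrary sequence $\big(J(u_n)\big)$ has a convergent subsequence, proving that $J(B)$ is relatively compact.

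I would close by noting that the only structural facts used are the reflexivity of $X$ (to extract weakly convergent subsequences from bounded sequences) and the convexity together with closedness of $Z$ (to keep the weak limit inside the domain). The main, though entirely standard, obstacle is precisely this last point: complete continuity is a hypothesis about behavior \emph{along weakly convergent sequences}, and it can only be used once one knows that the weak limit remains a legitimate point of $Z$; it is Mazur's theorem that makes this passage valid and thereby joins the reflexivity of $X$ to the complete continuity of $J$ to yield compactness.
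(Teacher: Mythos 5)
Your proof is correct: the paper itself offers no proof of this proposition, quoting it directly from {\sc Gasi\'nski--Papageorgiou} \cite{GP}, and your argument (reflexivity plus Eberlein--\v{S}mulian to extract a weakly convergent subsequence from a bounded sequence, Mazur's theorem to keep the weak limit inside the closed convex set $Z$, then complete continuity to upgrade to strong convergence of the images, with sequential relative compactness sufficing since $Y$ is metric) is precisely the standard proof given in that reference. You also correctly isolated the one delicate point, namely that the weak limit must be shown to lie in $Z$ before the hypothesis on $J$ can be invoked.
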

Our main tool will be the following critical theorem {\sc Bonanno-Marano} \cite{bonano}, which  we restate in a more convenient form.
\begin{theorem}\label{bonano}$(${\sc Bonanno-Marano} [\cite{bonano}, Theorem 3.6]$)$ Let X be a reflexive real Banach space and
$\Phi:X\rightarrow\mathbb{R}$ a coercive, continuously G\^{a}teaux differentiable and sequentially weakly lower semicontinuous functional whose G\^{a}teaux derivative admits
a continuous inverse on $X$.
 Let $\Psi:X\rightarrow \mathbb{R}$ be a continuously G\^{a}teaux differentiable
functional whose G\^{a}teaux derivative is compact such that
$$(a_0)\quad\inf_{x\in X}\Phi(x)=\Phi(0)=\Psi(0)=0.$$
Assume that there exist $r > 0$ and $\overline{x}\in X$, with $r < \Phi(\overline{x})$, such that:
$$(a_1)\quad\frac{\displaystyle\sup_{\Phi(x)\leq r}\Psi(x)}{r}<\frac{\Psi(\overline{x})}{\Phi(\overline{x})};$$
$$(a_2)\quad
\mbox{for each}\;
 \lambda\in\Lambda_r:=\Big(\frac{\Phi(\overline{x})}{\Psi(\overline{x})}, \frac{r}{\displaystyle\sup_{\Phi(x)\leq r}\Psi(x)}\Big),
\mbox{the functional}\;
 \Phi-\lambda \Psi
\;
\hbox{is coercive}.$$
Then for each $\lambda\in\Lambda_r$, the functional $\Phi-\lambda \Psi$ has at least three distinct critical points in $X$.
\end{theorem}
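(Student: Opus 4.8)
The plan is to realize $I_\lambda := \Phi - \lambda\Psi$ as a $C^1$ functional on $X$ whose two minima needed to trigger a mountain-pass argument are produced by two complementary mechanisms: a constrained (local) minimization on the sublevel set $\Phi^{-1}((-\infty,r))$, and a global minimization made possible by coercivity. First I would record that $I_\lambda$ satisfies the Palais--Smale condition for every $\lambda\in\Lambda_r$. Indeed, if $\{x_n\}$ is a Palais--Smale sequence, then coercivity of $I_\lambda$ (which is exactly condition $(a_2)$) together with the boundedness of $\{I_\lambda(x_n)\}$ forces $\{x_n\}$ to be bounded; reflexivity of $X$ yields a subsequence with $x_n\rightharpoonup x$; compactness of $\Psi'$ gives $\Psi'(x_n)\to\Psi'(x)$ strongly; and since $\Phi'(x_n)=I_\lambda'(x_n)+\lambda\Psi'(x_n)\to\lambda\Psi'(x)$, the continuous invertibility of $\Phi'$ gives $x_n=(\Phi')^{-1}(\Phi'(x_n))\to(\Phi')^{-1}(\lambda\Psi'(x))$ strongly. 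Thus $I_\lambda$ is a coercive $C^1$ functional satisfying (PS).

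Next I would produce the first critical point as a local minimum confined to the sublevel set. Using $(a_0)$ and the hypothesis $r<\Phi(\overline{x})$ from $(a_1)$, one introduces $\varphi(r):=\inf_{u\in\Phi^{-1}((-\infty,r))}\frac{\sup_{\Phi(v)\le r}\Psi(v)-\Psi(u)}{r-\Phi(u)}$ and checks, by taking $u=0$ (legitimate since $\Phi(0)=0<r$ by $(a_0)$), that $\varphi(r)\le\frac{\sup_{\Phi(x)\le r}\Psi(x)}{r}$, so that $\Lambda_r\subseteq(0,1/\varphi(r))$. Here the sequential weak lower semicontinuity of $\Phi$ enters, guaranteeing that $\{\Phi\le r\}$ is weakly closed. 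Then Bonanno's local minimum theorem, whose proof rests on Ekeland's variational principle applied to the restriction of $I_\lambda$ to this sublevel set, provides for each such $\lambda$ a point $x_1$ with $\Phi(x_1)<r$ that minimizes $I_\lambda$ over $\Phi^{-1}((-\infty,r))$ and is therefore a local minimum, hence a critical point, of $I_\lambda$ on all of $X$.

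Third, I would extract the remaining two critical points from coercivity and mountain-pass geometry. Since $I_\lambda$ is coercive and satisfies (PS), a minimizing-sequence argument refined by Ekeland's principle yields a global minimizer $x_2$; and because $\lambda>\Phi(\overline{x})/\Psi(\overline{x})$ forces $I_\lambda(\overline{x})=\Phi(\overline{x})-\lambda\Psi(\overline{x})<0$, we get $I_\lambda(x_2)<0$. The decisive step is to argue that $x_1$ and $x_2$ are \emph{distinct} local minima: the localization $\Phi(x_1)<r$ together with the strictly-negative-energy point $\overline{x}$ sitting in the higher region $\{\Phi>r\}$ (supplied by $(a_1)$) shows the global minimum cannot collapse onto the constrained minimum $x_1$; either $\Phi(x_2)\ge r$, so $x_2\ne x_1$ outright, or a comparison of energy levels shows $x_1$ is not a global minimum. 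With two distinct local minima and (PS) in hand, the Pucci--Serrin theorem (the $C^1$ mountain-pass result for functionals possessing two local minima) furnishes a third critical point $x_3$ of mountain-pass type, with $I_\lambda(x_3)$ bounded below by $\max\{I_\lambda(x_1),I_\lambda(x_2)\}$, so $x_3$ differs from both $x_1$ and $x_2$.

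The main obstacle I anticipate is exactly this distinctness/non-global-minimum claim: the constrained minimizer $x_1$ lives in $\{\Phi<r\}$, whereas coercivity only guarantees a global minimizer somewhere in $X$, and one must rule out the degenerate configuration in which the global minimum falls inside $\{\Phi<r\}$ and coincides with $x_1$. This is where the strict inequalities built into $(a_1)$, namely the gap $\frac{\sup_{\Phi\le r}\Psi}{r}<\frac{\Psi(\overline{x})}{\Phi(\overline{x})}$ together with $r<\Phi(\overline{x})$, must be exploited quantitatively to separate the two energy levels and secure $\inf_X I_\lambda<I_\lambda(x_1)$, thereby delivering the two genuinely distinct minima that the mountain-pass step consumes. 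By comparison, the verification of (PS) and the bookkeeping for the continuity and invertibility of $\Phi'$ are routine.
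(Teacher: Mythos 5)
This statement is imported by the paper from Bonanno--Marano \cite{bonano} without proof, so there is no in-paper argument to compare against; measured against the source's actual strategy, your architecture (constrained local minimum on the sublevel set via Ekeland's principle, a second minimum from coercivity, verification of the Palais--Smale condition using compactness of $\Psi'$ and the continuous inverse of $\Phi'$, and a Pucci--Serrin mountain-pass point as the third critical point) is the right one, and your (PS) verification is sound. However, the step you yourself flag as the main obstacle --- distinctness of the two minima --- is a genuine gap, and the repair you propose is provably unavailable: it is \emph{false} in general that $\inf_X I_\lambda < I_\lambda(x_1)$ under $(a_0)$--$(a_2)$. The hypotheses only yield $I_\lambda(x_1)\leq I_\lambda(0)=0$ and $I_\lambda(\overline{x})<0$, together with $I_\lambda>-r$ on $\{\Phi\leq r\}$, and these are compatible with the global minimizer lying inside $\{\Phi<r\}$ and coinciding with $x_1$. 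A one-dimensional model shows this: take $\Phi(x)=x^2$, $r=1$, $\Psi$ a bump of height $h$ inside the sublevel set and a bump of height $H$ at $\overline{x}=2$ with $4h<H<16h$, and $\lambda$ slightly above $4/H$; then $(a_0)$--$(a_2)$ hold while the global minimum of $I_\lambda=\Phi-\lambda\Psi$ sits at the interior bump, i.e.\ equals the constrained minimizer. So your dichotomy cannot be closed by comparing $\inf_X I_\lambda$ with $I_\lambda(x_1)$.

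The missing idea is the quantitative \emph{barrier} on the level set $\{\Phi=r\}$, which is exactly what $\lambda<r/\sup_{\Phi\leq r}\Psi$ buys: for every $u$ with $\Phi(u)\leq r$ one has $I_\lambda(u)\geq \Phi(u)-\lambda\sup_{\Phi\leq r}\Psi$, hence $I_\lambda\geq r-\lambda\sup_{\Phi\leq r}\Psi>0$ on $\{\Phi=r\}$, while $I_\lambda(0)=0$ and $I_\lambda(\overline{x})<0$ with $\Phi(\overline{x})>r$. This first shows that the minimizer of $I_\lambda$ over the sequentially weakly compact set $\{\Phi\leq r\}$ satisfies $\Phi(x_1)<r$ strictly (so it is a true local minimum), and second it dictates that the second minimum must be sought not globally but over the region $\{\Phi\geq r\}$: since $\inf_{\Phi\geq r}I_\lambda\leq I_\lambda(\overline{x})<0<\inf_{\Phi=r}I_\lambda$, minimizing sequences stay in the open set $\{\Phi>r\}$, and Ekeland's principle on the complete set $\{\Phi\geq r\}$ combined with (PS) produces a minimizer $x_2$ with $\Phi(x_2)>r$, an interior and hence local minimum, distinct from $x_1$ \emph{by construction}. (Alternatively, the coincidence case can be excluded a posteriori by a deformation argument: $\{I_\lambda\leq 0\}$ is disconnected by the positive barrier on $\{\Phi=r\}$, since it contains $x_1$ and $\overline{x}$ in the components $\{\Phi<r\}$ and $\{\Phi>r\}$ respectively, which is incompatible with $x_1$ being the only critical point at nonpositive levels.) With two genuinely distinct local minima in hand, your Pucci--Serrin step then correctly delivers the third critical point; without the barrier and the localized second minimization, the proof as you sketched it does not go through.
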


  \section{Proof of the Main Result}\label{sec3}

  In this section, we present the proof of Theorem \ref{principal}. To begin, let us denote
$$\Psi(u):=\int_{\Omega}\frac{1}{q(x)}V(x)|u|^{q(x)}dx.$$

The Euler-Lagrange functional corresponding to problem $(\ref{P})$ is then defined by $I_{\lambda}: X \rightarrow\mathbb{R},$
 $$I_{\lambda}(u):=\phi(u)- \lambda\Psi(u),\,\,\mbox{for all}\;u\in X,$$
 where $$\Phi(u):=\displaystyle\int_{\Omega}A(x,\Delta u)dx.$$

 It is clear that condition $(a_0)$ in Theorem \ref{bonano} is fulfilled, and by virtue of Proposition \ref{pr}, $\Psi$ is well-defined since we have for all $u\in X,$
\begin{eqnarray*}
|\Psi(u)|\leq \frac{1}{q^-}\int_{\Omega}|V(x)||u|^{q(x)}dx \leq \frac{1}{q^-}|V(x)|_{s(x)}||u|^{q(x)}|_{s'(x)}
\leq \frac{1}{q^-}|V(x)|_{s(x)}[|u|_{s'(x)q(x)}]^q.
\end{eqnarray*}

 Moreover, by inequality \eqref{k} in Remark \ref{embedding}, one has
  $$|\Psi(u)|\leq  \displaystyle\frac{1}{q^-}|V(x)|_{s(x)}|[k\|u\|]^q,$$ therefore $\Psi$ is indeed well-defined. We shall also need the following lemma.
\begin{lemma}\label{lm}
$(i)$ The functional $\Phi$ is a coercive, continuously G\^{a}teaux differentiable and sequentially weakly lower semicontinuous functional $(${\sc Boureanu} \cite{bour}$)$ whose G\^{a}teaux derivative admits a continuous inverse on $X$.\\
$(ii)$ The functional $\Psi$ 
 is a a continuously G\^{a}teaux differentiable functional whose G\^{a}teaux derivative is compact. 
\end{lemma}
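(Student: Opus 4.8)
The plan is to treat the two functionals separately, using the structural hypotheses $(H_1)$–$(H_4)$ for $\Phi$ and the embeddings of Remark \ref{embedding} for $\Psi$.

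For part $(i)$, I would first record the Gâteaux derivative
$$\langle \Phi'(u), v\rangle = \int_{\Omega} a(x, \Delta u)\Delta v\, dx,$$
which is well defined because hypothesis $(H_2)$ forces the Nemytskii operator $u \mapsto a(\cdot, \Delta u)$ to carry $L^{p(x)}(\Omega)$ into its conjugate $L^{p'(x)}(\Omega)$; continuity of this operator together with the Hölder inequality \eqref{ing} then yields continuous Gâteaux differentiability. Coercivity follows directly from $(H_4)$: the bound $|t|^{p(x)} \le p(x) A(x,t)$ gives $\Phi(u) \ge \frac{1}{p^+}\rho_{p(x)}(u)$, and Lemma \ref{2.2}$(2)$ bounds the modular below by $\|u\|^{p^-}$ when $\|u\| > 1$, so $\Phi(u) \to +\infty$ as $\|u\| \to \infty$. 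Finally, the monotonicity in $(H_3)$ makes $A(x,\cdot)$ convex, hence $\Phi$ is convex and, being strongly continuous, sequentially weakly lower semicontinuous, which is the point attributed to Boureanu \cite{bour}.

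The main obstacle in $(i)$ is the continuity of the inverse of $\Phi'$. Here I would argue that $\Phi'$ is a strictly monotone, coercive and continuous operator from the reflexive space $X$ into $X^*$: strict monotonicity is exactly the strict inequality in $(H_3)$, coercivity of $\langle \Phi'(u), u\rangle/\|u\|$ comes from the estimate $\langle \Phi'(u), u\rangle = \int_{\Omega} a(x,\Delta u)\Delta u\,dx \ge \rho_{p(x)}(u)$ furnished by $(H_4)$, and continuity was already established. By the Browder–Minty theorem, $\Phi'$ is then a bijection onto $X^*$. To upgrade the inverse to a continuous map I would verify that $\Phi'$ enjoys the $(S_+)$ property, namely that $u_n \rightharpoonup u$ together with $\limsup_n \langle \Phi'(u_n), u_n - u\rangle \le 0$ forces $u_n \to u$ strongly; combined with strict monotonicity this makes $(\Phi')^{-1}$ continuous. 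This is the technical heart of the lemma and the step I expect to demand the most care.

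For part $(ii)$, the derivative reads
$$\langle \Psi'(u), v\rangle = \int_{\Omega} V(x)|u|^{q(x)-2}u\, v\, dx,$$
and its continuous Gâteaux differentiability follows from the growth of the integrand, Hölder's inequality and the embedding $X \hookrightarrow L^{s'(x)q(x)}(\Omega)$, exactly as in the well-definedness estimate already displayed for $\Psi$. Compactness I would obtain by first showing $\Psi'$ is completely continuous and then invoking Proposition \ref{compact}: if $u_n \rightharpoonup u$ in $X$, the compactness of the embedding $X \hookrightarrow L^{s'(x)q(x)}(\Omega)$ gives $u_n \to u$ strongly there, and continuity of the Nemytskii map $u \mapsto V|u|^{q(x)-2}u$ together with Hölder's inequality yields $\|\Psi'(u_n) - \Psi'(u)\|_{X^*} \to 0$. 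Since $X$ is reflexive and the relevant subset may be taken closed and convex, Proposition \ref{compact} promotes this complete continuity to compactness, which completes the proof.
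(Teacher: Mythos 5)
Your proposal is correct and follows essentially the same route as the paper: coercivity of $\Phi$ from $({\bf H_4})$ and Lemma \ref{2.2}, strict monotonicity from $({\bf H_3})$, the Minty--Browder theorem for invertibility of $\Phi'$, an $(S_+)$-type argument for continuity of $(\Phi')^{-1}$, and complete continuity combined with Proposition \ref{compact} for the compactness of $\Psi'$. The step you single out as the technical heart---the $(S_+)$ property of $\Phi'$---is exactly what the paper obtains by citing {\sc Boureanu} \cite[Theorem 3.2]{bour}, so it need not be proved from scratch; apart from that, your convexity argument for the sequential weak lower semicontinuity (which the paper only cites from \cite{bour}) and your formulation of part (ii) (letting $u_n\rightharpoonup u$ and estimating $\|\Psi'(u_n)-\Psi'(u)\|_{X^*}$, rather than varying the test functions as the paper does) are sound and, if anything, cleaner than the paper's write-up.
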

\begin{proof}
The proof splits into two parts:
\begin{enumerate}
\item[(i)] It is clear from Lemma \ref{2.2} and hypothesis $(\bf{H_4})$ that for every $u\in X$ such that $\|u\| >1,$ one has
\begin{eqnarray}\label{coe}
\Phi(u)\geq\int_{\Omega}\frac{1}{p(x)}|\Delta u|^{p(x)}dx
\geq \frac{1}{p^+}\rho_{p(x)}(u)
\geq  \frac{1}{p^+} \|u\|^{p^-},
\end{eqnarray}
and thus $\Phi$ is coercive.

For the rest of the proof, we will use the same argument as in the proof of
 {\sc Ho-Sim}~\cite[Lemma 3.2]{HS}.
 First, we
 shall show that $\Phi'$
is strictly monotone. Using $({\bf{H_3}})$ and integrating over $\Omega$, we obtain for
all $u, v\in X$ with $u\neq v,$ 
$$ 0<\displaystyle\int_{\Omega}(a(x,\Delta u)-a(x,\Delta v))(\Delta u-\Delta v)dx=<\Phi'(u)-\Phi'(v), u-v>,$$
which means that $\Phi'$
is strictly monotone. 

Note that
the strict monotonicity of $\Phi'$ implies that $\Phi'$ is an injection.
From the assertion ${(\bf{H_4})}$ it is clear that for any $u\in X$ with $\|u\|>1$, one has
$$\frac{<\Phi'(u),u>}{\|u\|}\geq \frac{\|u\|^{p^{-}}}{\|u\|}=\|u\|^{p^{-}-1},$$
and thus $\Phi'$ is coercive. Therefore it is a surjection in view of Minty-Browder Theorem for reflexive Banach space (cf. {\sc Zeidler} \cite{Zei}), so $\Phi'$ has a bounded inverse mapping $(\Phi')^{-1}:X^*\rightarrow X$.\\
Let $f_n\rightarrow f$ as $n\rightarrow +\infty$ in $X^*$ and set $u_n=(\Phi')^{-1}(f_n)$, $u=(\Phi')^{-1}(f)$. Then the boundedness of $(\Phi')^{-1}$ and $\{f_n\}$ imply that $\{u_n\}$ is bounded. Without loss of generality, we can assume that there exists a subsequence, again denoted by ${u_n},$ and $\tilde u$ such that $u_n\rightharpoonup \tilde u$ (weakly) in $X,$ which implies
$$|<f_n-f,u_n-\tilde u>|\leq \|f_n-f\|_{X^*}\|u_n-\tilde u\|.$$
We can now infer that $$\displaystyle\lim_{n\rightarrow+\infty}<\Phi'(u_n),u_n-\tilde u>=\lim_{n\rightarrow+\infty}<f_n,u_n-\tilde u>=\lim_{n\rightarrow+\infty}<f_n-f,u_n-\tilde u>=0,$$
which implies that $$\displaystyle\lim_{n\rightarrow+\infty}\int_{\Omega}a(x,\Delta u_n)(\Delta u_n-\Delta \tilde u)dx=0.$$
By invoking {\sc Boureanu}\cite[Theorem 3.2]{bour}, one can conclude that $u_n\rightarrow \tilde u$ (strongly) as $n\rightarrow +\infty$ in $X$. This yields $f_n=\Phi'(u_n)\rightarrow\Phi'(\tilde u)$ and thus $f=\Phi'(\tilde u)$, by the injectivity of $\Phi'$, we obtain $u=\tilde u$ and hence $(\Phi')^{-1}(f_n)\rightarrow(\Phi')^{-1}(f)$ and the proof of 
Lemma\ref{lm}  is thus completed.

\item[(ii)] Next, we show that $\Psi'(u)$ is compact. Let $v_n\rightharpoonup v$ in $X.$ Then
 \begin{eqnarray*}
 |<\Psi'(u),v_n>|&-&|<\Psi'(u),v>| 
 \leq \int_{\Omega}|V(x)| |u|^{q(x)-1}|v_n-v| dx\\
&\leq& |V(x)|_{s(x)}||u|^{q(x)-1}|_{\frac{q(x)}{q(x)-1}}|v_n-v|_{\beta(x)}.
\end{eqnarray*}
As a consequence of Remark \ref{embedding} and due to the compact embedding $X\hookrightarrow L^{\beta(x)}(\Omega)$, we have $|<\Psi'(u),v_n>|\rightarrow|<\Psi'(u),v>|,$ as $n\rightarrow +\infty$. This means that $\Psi'(u)$ is completely continuous. So, by  Proposition \ref{compact}, $\Psi'$ is indeed compact. \end{enumerate} \end{proof}
{\bf Proof of Theorem \ref{principal}}. As we have observed above, the functionals $\Phi$ and $\Psi$ satisfy the regularity assumptions of Theorem \ref{bonano}.
Now, let $v_d\in X$ be the function defined by
\begin{equation*}
v_d:=\left\{
\begin{array}{ccc}
&0,   & \; \; \; \; \; \mbox{if}\ \  x\in\Omega\setminus B(x_0,D),\\
&  \displaystyle \frac{2d}{D}(D-|x-x_0|),  &\qquad\; \; \; \; \; \; \; \; \mbox{if}\ \ x\in B(x_0,D)\setminus B(x_0,\frac{D}{2}),\\
&d,  & \mbox{if}\ \ x\in B(x_0,\frac{D}{2}),\\
\end{array}
\right.
\end{equation*}
where $|.|$ denotes the Euclidean norm in $\mathbb{R}^N.$ It is then easy to see that
\begin{equation*}
\Delta v_d=\left\{
\begin{array}{ccc}
\qquad 0,  \qquad \qquad  \qquad \mbox{if} \ \ x\in\Omega\setminus B(x_0,D)\cup B(x_0,\frac{D}{2}),\\
\\[-3mm]
\displaystyle \frac{-2d(N-1)}{D(x-x_0)}, \quad \quad \mbox{if} \ \ x\in B(x_0,D)\setminus B(x_0,\frac{D}{2}).
\end{array}
\right.
\end{equation*}

Using Lemma \ref{2.2} and the continuity of the embedding $L^{p^+}(\Omega)\hookrightarrow L^{p(x)}(\Omega)$, we can conclude that
\begin{eqnarray*}
\hspace{-1cm}\frac{1}{p^+}\Big[ \frac{2d(N-1)}{D^2}\Big]_{p}L\leq \Phi(v_d)\leq c_3 L^{\frac{1}{p^+}} |\alpha(x)|_{\frac{p(x)}{p(x)-1}} \frac{4d(N-1)}{D^2}
+c_3 \Big[\frac{4d(N-1)}{D^2}\Big]^pL,
\end{eqnarray*}

$$\Psi(v_d)\geq\displaystyle\int_{B(x_0,\frac{D}{2})}\frac{V(x)}{q(x)}|v_d|^{q(x)} dx\geq \frac{1}{q^+}v_0[d]_{q} m(\frac{D}{2})^N,$$
and hence $$\frac{\Psi(v_d)}{\Phi(v_d)}\geq\frac{\frac{1}{q^+}v_0[d]_{q} w(\frac{D}{2})^N}{c_3 L^{\frac{1}{p^+}} |\alpha(x)|_{\frac{p(x)}{p(x)-1}} \frac{4d(N-1)}{D^2}
+c_3 \Big[\frac{4d(N-1)}{D^2}\Big]^pL}=\gamma_d. $$

Next, from $r<\frac{1}{p^+}\Big[\frac{2d(N-1)}{D^2}\Big]_{p}L$, we get $r<\Phi(v_d)$.
Now, for each $u\in\Phi^{-1}((-\infty,r])$, due to condition ${\bf{(H_4)}}$, one has that
\begin{equation}\label{r1}
\frac{1}{p^+}\big[\|u\|\big]_p\leq r.
\end{equation}

Proposition \ref{pr} and inequalities \eqref{r1} and \eqref{k} now yield
\begin{eqnarray}\label{psi}
\Psi(u)&\leq&\frac{1}{q^{-}}|V|_{s(x)}||u|^{q(x)}|_{s'(x)}
\leq\frac{1}{q^-}|V|_{s(x)}\big[k\|u\|\big]^q \nonumber\\
&\leq&\frac{1}{q^-}|V|_{s(x)}[k]^q\big[(p^+)^{\frac{1}{p^-}}[r]^{\frac{1}{p}}\big]^q
\leq \frac{(p^+)^{\frac{q^+}{p^-}}}{q^-}[k]^q|V|_{s(x)} \big[[r]^{\frac{1}{p}}\big]^q.
\end{eqnarray}

Therefore  $$\frac{1}{r}\displaystyle\sup_{\Phi(u)\leq r}\Psi(u)\leq \overline{w}_r.$$

In the next step, we shall prove that for each $\lambda>0$, the energy functional $\Phi-\lambda\Psi$ is coercive.
By Remark \ref{embedding}, we have
\begin{eqnarray}\label{eqqq}
\Psi(u)&\leq& \frac{1}{q^-}\int_{\Omega}V(x)|u|^{q(x)}dx
\leq \frac{1}{q^-}|V|_{s(x)}\Big[k\|u\|\Big]^{q}.
\end{eqnarray}

For $\|u\|>1,$  relations \eqref{coe} and \eqref{eqqq} give the following
$$\Phi(u)-\lambda\Psi(u)\geq \frac{1}{p^+} \|u\|^{p^-}-\lambda \frac{1}{q^-}|V|_{s(x)}\Big[k\|u\|\Big]^{q}.$$

Since $1\leq q^-\leq q^+<p^-$, it follows that $\Phi(u)-\lambda\Psi(u)$ is coercive.
Finally, due to the fact that $$\overline{\Lambda}:=\Big(\frac{1}{\gamma_d}, \frac{1}{\overline{w}_r}\Big)\subseteq\Big( \frac{\Phi(v_d)}{\Psi(v_d)},\frac{r}{\sup_{\Phi(u)\leq r}\Psi(u)}\Big),$$
Theorem \ref{bonano} implies that for each $\lambda\in \overline{\Lambda}_r$, the functional  $\Phi-\lambda\Psi$ admits at least three critical points in X which are weak solutions for problem \eqref{P}. This completes the proof of Theorem \ref{principal}.
\hfill $\Box$

\section*{Acknowledgements}

The authors gratefully acknowledge comments and suggestions by the referee.  The third author acknowledges the support of the Deanship of Science Research (DSR) at King Abdulaziz University, Jeddah. The fourth author was supported by the Slovenian Research Agency program P1-0292 and grants N1-0114 and N1-0083.

\vskip 1.0cm

$^a$ (Khaled Kefi) Faculty of Computer Science and Information Technology, Northern Border University, Rafha, Kingdom of Saudi Arabia \& Mathematics Department, Faculty of Sciences, University of Tunis El-Manar, 1060 Tunis, Tunisia.\\
email:  {\it khaled\_kefi@yahoo.fr}\\  
https://orcid.org/0000-0001-9277-5820\\

$^b$ (Nawal Irzi) Mathematics Department, Faculty of Sciences, University of Tunis El-Manar, 1060 Tunis, Tunisia.\\
email: {\it nawal.irzi@fst.utm.tn}\\ 
https://orcid.org/0000-0002-7094-6278\\

$^c$ (Mohammed Mosa Al-Shomrani) Department of Mathematics, Faculty of Science, King Abdulaziz University, P.O. Box 80203, 21589 Jeddah, Saudi Arabia.\\
email: {\it malshomrani@hotmail.com}\\ 
https://orcid.org/0000-0001-7397-5165 \\

$^{d,e}$ (Du\v{s}an D. Repov\v{s}) Faculty of Education and Faculty of Mathematics and Physics, University of Ljubljana \&
Institute of Mathematics, Physics and Mechanics, 1000 Ljubljana, Slovenia.\\
email: {\it dusan.repovs@guest.arnes.si}\\ 
https://orcid.org/0000-0002-6643-1271
\end{document}